\documentclass[11pt]{amsart}
\DeclareOption{A4}
   {\setlength\paperheight {297mm}%
    \setlength\paperwidth  {210mm}}
\ExecuteOptions{A4}
\ProcessOptions
\RequirePackage{amsmath}
\RequirePackage{amssymb}
\RequirePackage{amsthm}
\RequirePackage{epsfig}
\setlength\oddsidemargin  {14pt}
\setlength\evensidemargin {16pt}
\setlength\marginparwidth {60pt}
\setlength\hoffset{7mm}
\setlength\voffset{5mm}
\setlength\lineskip{1pt}
\setlength\normallineskip{1pt}
\setlength\overfullrule{0pt}
\setlength\textwidth{32pc}
\setlength\textheight{46\baselineskip}
\setlength\headsep {9pt}

\setlength\parindent{1em}
\setlength\parskip{0pt}

\def\a{\alpha}               
\def\d{\delta}       \def\la{\lambda}     
       \def\t{\theta}       
                  
\def\ch{\chi}

\def\F{\Phi}

\def\D{{\mathbb D}}  
\def\C{{\mathbb C}}  
\def\R{{\mathbb R}}

\def\ch{{\mathcal H}}    
    
\def\rad{{\mathcal R}}   

\def\({\left(}       \def\){\right)}

\newtheorem{prop}{\sc Proposition}
\newtheorem{lem}[prop]{\sc Lemma}
\newtheorem{thm}[prop]{\sc Theorem}
\newtheorem{cor}[prop]{\sc Corollary}


\begin{document}
\title[On the generalized Zalcman functional]{On the generalized Zalcman functional for some classes of univalent functions}
\author[I. Efraimidis]{Iason Efraimidis}
\address{Departamento de Matem\'aticas, Universidad Aut\'onoma de
Madrid, 28049 Madrid, Spain}
\email{iason.efraimidis@uam.es}
\author[D. Vukoti\'c]{Dragan Vukoti\'c}
\address{Departamento de Matem\'aticas, Universidad Aut\'onoma de
Madrid, 28049 Madrid, Spain} \email{dragan.vukotic@uam.es}
 \urladdr{http://www.uam.es/dragan.vukotic}
\dedicatory{Dedicated to Professor David Shoikhet on the occasion of his 60th birthday}
\subjclass[2010]{30C45, 30C50}
\date{10 March, 2014.}
\begin{abstract}
We prove three sharp estimates for the generalized Zalcman coefficient functional: one for the Hurwitz class, another for the Noshiro-Warschawski class, and yet another for the functions in the closed convex hull of convex univalent functions. In each case the extremal functions are identified. We also observe that an asymptotic version of the Zalcman conjecture is true.
\end{abstract}
\maketitle
\par\medskip
\textbf{On the Zalcman functionals}. Let $\D$ denote the unit disk in the complex plane and $S$ the class of all normalized univalent (that is, analytic and one-to-one) functions in $\D$ with the Taylor series $f(z)=z+\sum_{n=2}^\infty a_n z^n$. See \cite{Du} and \cite{P} for the theory of these functions.
\par
It is well known that the coefficients of any $f$ in $S$ satisfy $|a_2^2-a_3| \le 1$; see \cite[Theorem~1.5]{P}. \textit{Zalcman's conjecture\/} states that every $f$ in $S$ should satisfy a more general inequality $|a_n^2-a_{2n-1}| \le (n-1)^2$. Partial progress was obtained in \cite{BT}, \cite{M1, M2}, and \cite{K1, K2}. The importance of the conjecture stems from the fact that, if assumed true, it implies the celebrated Bieberbach's conjecture:  $|a_n|\le n$ (see \cite{BT}). This problem was  instrumental in the development of the subject for several decades and was finally solved by L. de Branges in 1984 (\textit{cf.\/} \cite{dB} or \cite{H}). 
\par 
We observe in this paper that the Zalcman conjecture is asymptotically true.
\par
More general versions of Zalcman's conjecture have also been considered for the generalized functional $\F (f) = \la\,a_n^2 - a_{2n-1}$ for certain positive values of $\la$, \textit{e.g.\/}, in \cite{BT} and \cite{M2}. For this general functional, we obtain sharp estimates for three well-known classes of functions: for the Hurwitz and  Noshiro-Warschawski classes of univalent functions and for the closure of the convex hull of convex functions in $S$. These results, although simple and referring to special classes of functions, do not seem to have been recorded in the literature and it is our belief that they should be. Our exposition of results will flow from the more general to the more special classes and various methods of proof will be employed.
\par\medskip
\textsc{Acknowledgments}. {\small This paper is part of the first author's doctoral thesis work under the supervision of the second author at  Universidad Aut\'onoma de Madrid. The second author thankfully acknowledges partial support from MINECO grant MTM2012-37436-C02-02, Spain.}
\par\medskip
\textbf{An asymptotic version of the Zalcman conjecture}. Let $f\in S$ and denote by $M_\infty (r,f)$ the maximum modulus of $f$ on the circle of radius $r$ centered at the origin. Recall that the \textit{Hayman index\/} of $f$ is the number
$$
 \a=\lim_{r\to 1} (1-r)^2 M_\infty (r,f)\,.
$$
It is well known that $0\le\a\le 1$ \cite[p.~157]{Du}. Moreover, Hayman's regularity theorem \cite[Theorem~5.6]{Du} asserts that for each $f$ in $S$ its $n$-th Taylor coefficient $a_n$ satisfies $\lim_{n\to\infty} |a_n/n|=\a$.
\par
We now show that an asymptotic version of the Zalcman conjecture holds.
\begin{thm}
Let $f(z)=z+a_2 z^2+a_3 z^3+\ldots$, $f\in S$, and let $\a$ be its Hayman index. Then
\begin{equation}
 \lim_{n\rightarrow\infty} \frac{|a_n^2-a_{2n-1}|}{(n-1)^2} = \alpha^2\,.
 \label{haym}
\end{equation}
Also, if $B_n = \sup_{f\in S}|a_n^2-a_{2n-1}|$ then
$$
\lim_{n\rightarrow\infty} \frac{B_n}{(n-1)^2} = 1.
$$
\end{thm}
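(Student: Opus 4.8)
The plan is to exploit Hayman's regularity theorem, which is the crucial tool already recalled in the excerpt: for each $f\in S$ with Hayman index $\a$ one has $|a_n/n|\to\a$ as $n\to\infty$. The two assertions of the theorem then split naturally. For the first limit \eqref{haym}, I would write
$$
\frac{|a_n^2-a_{2n-1}|}{(n-1)^2}
$$
and estimate the two terms separately. The term $a_n^2/(n-1)^2$ is the one that will carry the limit: since $|a_n|/n\to\a$ and $n^2/(n-1)^2\to 1$, we get $|a_n|^2/(n-1)^2\to\a^2$. The remaining term $a_{2n-1}/(n-1)^2$ should vanish in the limit, and this is where I expect the only real work to lie.

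To control $a_{2n-1}$ I would again invoke Hayman regularity, now applied along the subsequence of indices $2n-1$: we have $|a_{2n-1}|/(2n-1)\to\a$, hence $|a_{2n-1}|=(2n-1)(\a+\op(1))$, which grows only linearly in $n$. Dividing by $(n-1)^2$ therefore gives $a_{2n-1}/(n-1)^2\to 0$. Combining this with the preceding paragraph and the triangle inequality (to bound $\big||a_n^2-a_{2n-1}|-|a_n|^2\big|\le|a_{2n-1}|$) yields
$$
\lim_{n\to\infty}\frac{|a_n^2-a_{2n-1}|}{(n-1)^2}=\a^2,
$$
which is exactly \eqref{haym}. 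The key conceptual point is simply that $a_{2n-1}$ is of lower order ($O(n)$) than the normalizing factor $(n-1)^2$, so only the squared coefficient survives.

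For the second statement, concerning $B_n=\sup_{f\in S}|a_n^2-a_{2n-1}|$, I would argue with a two-sided squeeze. The upper bound $B_n\le(n-1)^2+\op((n-1)^2)$ should follow from classical coefficient bounds for $S$ (the de Branges / Bieberbach bounds $|a_n|\le n$ give $|a_n^2-a_{2n-1}|\le n^2+(2n-1)$, and dividing by $(n-1)^2$ tends to $1$), so $\limsup_n B_n/(n-1)^2\le 1$. For the matching lower bound I would test against the Koebe function $k(z)=z/(1-z)^2$, whose Hayman index is $\a=1$; by \eqref{haym} applied to $k$, the ratio for this single function already tends to $1$, forcing $\liminf_n B_n/(n-1)^2\ge1$. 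The main obstacle, modest as it is, is making the upper bound uniform in $f$ rather than merely pointwise — but since the classical bound $|a_n|\le n$ holds for \emph{every} $f\in S$ with an explicit constant, the estimate on $|a_n^2-a_{2n-1}|$ is already uniform, and the two bounds pinch $B_n/(n-1)^2$ to the common value $1$.
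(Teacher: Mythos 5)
Your proposal is correct and takes essentially the same route as the paper: for \eqref{haym} both arguments apply Hayman's regularity theorem at the indices $n$ and $2n-1$ together with the triangle inequality, noting that $a_{2n-1}=O(n)$ is negligible against $(n-1)^2$; and for $B_n$ both squeeze between the Koebe function (lower bound) and uniform coefficient bounds (upper bound). The only cosmetic differences are that the paper observes the Koebe function gives $B_n\ge(n-1)^2$ exactly (since $n^2-(2n-1)=(n-1)^2$), rather than only asymptotically, and that it points out the upper bound needs only the asymptotic Bieberbach conjecture, $\sup_{f\in S}|a_n|/n\to 1$, rather than the full strength of de Branges' theorem that you invoke.
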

\begin{proof}
Applying the triangle inequality we get
\begin{align*}
\frac{|a_n^2-a_{2n-1}|}{(n-1)^2} & \leq \left(\frac{|a_n|}{n}\right)^2 \left( \frac{n}{n-1}\right)^2 + \frac{|a_{2n-1}|}{2n-1} \frac{2n-1}{(n-1)^2} \quad \text{and}\\
\frac{|a_n^2-a_{2n-1}|}{(n-1)^2} & \geq \left(\frac{|a_n|}{n}\right)^2 \left( \frac{n}{n-1}\right)^2 - \frac{|a_{2n-1}|}{2n-1} \frac{2n-1}{(n-1)^2},
\end{align*}
where the right-hand side in both inequalities converges to $\alpha^2$ in view of Hayman's regularity theorem. Hence \eqref{haym} follows.
\par
The Koebe function clearly shows that $B_n \geq (n-1)^2$. Using the customary notation $A_n=\sup_{f\in S} |a_n|$, we have
\begin{align*}
1 & \leq \frac{B_n}{(n-1)^2} \leq \frac{\sup_{f\in S}(|a_n|^2+|a_{2n-1}|)}{(n-1)^2} \leq \frac{A_n^2+A_{2n-1}}{(n-1)^2} \\
 & = \left(\frac{A_n}{n}\right)^2 \left( \frac{n}{n-1}\right)^2 + \frac{A_{2n-1}}{2n-1} \frac{2n-1}{(n-1)^2} \to 1\,, \quad n\to\infty\,.
\end{align*}
The convergence above holds, for example, in view of the asymptotic Bieberbach conjecture \cite[p.~66]{Du} if one does not want to use the full strength of de Branges' theorem.
\end{proof}
\begin{cor}
If $f\in S$ is not a rotation of the Koebe function, then for every $\d\in (0,1-\alpha^2)$ there exist $n_0\in\mathbb{N}$ such that
$$
 |a_n^2-a_{2n-1}| \leq (1-\delta)(n-1)^2\,,
$$
for all $n\geq n_0$.
\end{cor}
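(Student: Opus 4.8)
The plan is to deduce the Corollary almost immediately from the limit \eqref{haym} of the Theorem; the only substantive point is to convert the hypothesis ``$f$ is not a rotation of the Koebe function'' into the quantitative statement $\alpha<1$.

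First I would isolate the one external fact that lends the Corollary its content: among the functions of $S$, the rotations of the Koebe function are exactly those whose Hayman index attains its maximal value, that is, $\alpha=1$. This is classical and can be located in the sources already cited for the Hayman index (see \cite{Du} or \cite{H}); it rests on the equality case of the growth theorem, the rotations of the Koebe function being its only extremals. Granting this, the assumption that $f$ is not a Koebe rotation is equivalent to $\alpha<1$, so that $1-\alpha^2>0$ and the interval $(0,1-\alpha^2)$ over which $\delta$ ranges is nonempty and the statement has genuine content.

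With that in hand the argument is purely a matter of unwinding the definition of a limit. Fix $\delta\in(0,1-\alpha^2)$. Since the condition $\delta<1-\alpha^2$ is the same as $\alpha^2<1-\delta$, the number $\varepsilon:=(1-\delta)-\alpha^2$ is strictly positive. By \eqref{haym} the quotient $|a_n^2-a_{2n-1}|/(n-1)^2$ tends to $\alpha^2$ as $n\to\infty$, so there is an index $n_0\in\N$ beyond which this quotient stays within $\varepsilon$ of $\alpha^2$; in particular $|a_n^2-a_{2n-1}|/(n-1)^2<\alpha^2+\varepsilon=1-\delta$ for all $n\ge n_0$. Multiplying through by $(n-1)^2$ yields the asserted bound.

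The single genuine obstacle is the equality characterization $\alpha=1\iff$ Koebe rotation; once this is quoted, everything else is the standard $\varepsilon$--$n_0$ passage to the limit and involves no further estimate. I would therefore concentrate on stating that characterization precisely and pointing to its proof, rather than on the trivial final inequality.
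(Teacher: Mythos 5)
Your proof is correct and is exactly the argument the paper intends: the Corollary is stated without proof as an immediate consequence of the limit \eqref{haym}, via the standard $\varepsilon$--$n_0$ argument, with the hypothesis that $f$ is not a Koebe rotation serving (through the classical fact that $\alpha=1$ characterizes Koebe rotations, cf.\ \cite{Du}) only to ensure $1-\alpha^2>0$ so the range of $\delta$ is nonempty. Your identification of that characterization as the one nontrivial ingredient is apt, and nothing further is needed.
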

\par\medskip
\par\medskip
\textbf{Estimates for the closed convex hull of convex functions}. Denote by $C$ the class of convex functions in $S$. A typical example is the half-plane function $\ell(z)=\frac{z}{1-z}$. It is well known that the coefficient estimate can be improved a great deal for the functions in $C$: by a theorem of Loewner, they must satisfy $|a_n|\le 1$, with equality only for the function $\ell$ and its rotations (see \cite[Corollary on p.~45]{Du}. It would, thus, be natural to expect a similar improvement for convex functions in the Zalcman conjecture and this is indeed the case.
\par
Denote by $\textrm co(C)$ the convex hull of $C$ and by $\overline{\textrm co(C)}$ its closure in the topology of uniform convergence on compact subsets of $\D$. Note that this larger class  no longer consists only of univalent functions. 
\par 
By a \textit{rotation\/} of a function $f$ in $S$, as is usual \cite[Chapter~2]{Du}, we mean the function $f_c (z)=\overline{c} f(c z)$, $|c|=1$, which is again in $S$.
\par 
The estimate below is known when $\la=0$ so there is no need to include that case in the result.
\par\medskip
\begin{thm} \label{thm-co-convex}
Let $0<\la\le 2$. If $f \in \overline{\textrm co(C)}$, then $|\la\,a_n^2 - a_{2n-1}| \le 1$ for all $n\ge 2$. For any fixed $n$ and $\la <2$, equality holds only for the functions of the following form (and for their rotations): 
$$
 f(z) = \sum_{k=1}^{2n-2} m_k \frac{z}{1- e^{i \theta_k}z},
$$
where $0\leq m_k \leq 1, \, \theta_k=\frac{(2k+1) \pi}{2n-2}$, and
$$
 \sum_{k=1}^{n-1} m_{2k} = \sum_{k=1}^{n-1} m_{2k-1} = 1/2\,.
$$
\end{thm}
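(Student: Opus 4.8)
The plan is to use the classical integral representation of the closed convex hull of $C$. By the theorem of Brickman, MacGregor and Wilken, the extreme points of $\overline{\mathrm{co}(C)}$ are exactly the half-plane maps $z/(1-xz)$ with $|x|=1$, and every $f\in\overline{\mathrm{co}(C)}$ admits a unique representation
$$
f(z)=\int_{\T}\frac{z}{1-xz}\,d\mu(x),
$$
with $\mu$ a probability measure on the unit circle $\T$; uniqueness holds because the numbers $\int_{\T}x^{m}\,d\mu$, $m\ge0$, are the Fourier coefficients of the positive measure $\mu$. Reading off Taylor coefficients gives $a_n=\int_{\T}x^{n-1}\,d\mu(x)$ and $a_{2n-1}=\int_{\T}x^{2(n-1)}\,d\mu(x)$. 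Pushing $\mu$ forward under $x\mapsto x^{n-1}$ produces a probability measure $\nu$ on $\T$ for which $a_n=\int_{\T}\xi\,d\nu(\xi)=:m_1$ and $a_{2n-1}=\int_{\T}\xi^2\,d\nu(\xi)=:m_2$. Thus the problem reduces to showing $|\la m_1^2-m_2|\le1$ for the first two trigonometric moments of an arbitrary probability measure on $\T$.

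First I would prove the sharper real-part estimate $\mathrm{Re}(\la m_1^2-m_2)\le1$, from which the modulus bound follows: the rotation $\xi\mapsto\omega\xi$ multiplies $\la m_1^2-m_2$ by $\omega^2$, so applying the real-part bound to all rotations and taking the supremum yields $|\la m_1^2-m_2|\le1$. For the real-part estimate write $\xi=e^{i\theta}$ and set $a=\int\cos\theta\,d\nu$, $b=\int\sin\theta\,d\nu$, $c=\int\cos2\theta\,d\nu$, so that $\mathrm{Re}(\la m_1^2-m_2)=\la(a^2-b^2)-c$. The identity $1+c=\int(1+\cos2\theta)\,d\nu=2\int\cos^2\theta\,d\nu$ together with the Cauchy--Schwarz inequality $a^2=\bigl(\int\cos\theta\,d\nu\bigr)^2\le\int\cos^2\theta\,d\nu$ gives
$$
\la(a^2-b^2)\le\la\,a^2\le\la\int\cos^2\theta\,d\nu\le 2\int\cos^2\theta\,d\nu=1+c,
$$
where the last step is precisely where the hypothesis $\la\le2$ enters. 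Hence $\mathrm{Re}(\la m_1^2-m_2)\le1$.

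For the equality statement with $\la<2$ I would trace the chain above backwards. Equality forces $2\int\cos^2\theta\,d\nu=\la(a^2-b^2)\le\la\int\cos^2\theta\,d\nu$, and since $\la<2$ this is possible only if $\int\cos^2\theta\,d\nu=0$; thus $\cos\theta=0$ holds $\nu$-a.e., i.e. $\nu$ is carried by $\{i,-i\}$. Then $a=0$, $c=-1$, and the residual equality $1-\la b^2=1$ forces $b=\int\sin\theta\,d\nu=0$, splitting the mass equally, so that $\nu=\tfrac12(\delta_i+\delta_{-i})$, up to the rotation used to make the functional real. Pulling this back through $x\mapsto x^{n-1}$, the measure $\mu$ must be supported on the solutions of $x^{2(n-1)}=-1$, namely the points $e^{i\theta_k}$ with $\theta_k=(2k+1)\pi/(2n-2)$, $k=1,\dots,2n-2$; since $\xi=x^{n-1}=i(-1)^k$ equals $i$ for even $k$ and $-i$ for odd $k$, the conditions $\nu(\{i\})=\nu(\{-i\})=\tfrac12$ become exactly $\sum_{k}m_{2k}=\sum_{k}m_{2k-1}=\tfrac12$. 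Conversely, any such $\mu$ gives $m_1=0$ and $m_2=-1$, hence $|\la a_n^2-a_{2n-1}|=1$; this identifies the extremal functions and their rotations.

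The analytic heart of the argument---the Cauchy--Schwarz estimate closed off by $\la\le2$---is short; the main obstacle is the bookkeeping in the equality case, where one must invoke the uniqueness of the representing measure $\mu$ and correctly pull the two-point extremal measure $\nu$ back along the $(n-1)$-fold map $x\mapsto x^{n-1}$, matching the parity of $k$ to the two atoms $\pm i$ so as to recover the stated support points $\theta_k$ and the two mass conditions.
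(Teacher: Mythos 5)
Your proposal is correct and follows essentially the same route as the paper: the Brickman--MacGregor--Wilken representation with its unique probability measure, reduction to the real part via the rotation homogeneity of the functional, the Cauchy--Schwarz inequality closed off by $\lambda\le 2$, and the same equality analysis forcing the support onto the zeros of $\cos\bigl((n-1)\theta\bigr)$ with the vanishing sine integral splitting the mass equally between even and odd indices. The only difference is cosmetic: you push $\mu$ forward under $x\mapsto x^{n-1}$ and phrase everything in terms of the first two moments of a measure on the circle, which is a tidy repackaging of the paper's direct computation with $(n-1)\theta$ rather than a different argument.
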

\begin{proof}
A direct proof can be given by following the methods from \cite{BT}. By a well-known result from \cite{BMW}, there is a unique probability measure $\mu$ on $[0,2\pi]$, such that
$$
 f(z) = \int_0^{2\pi} \frac{z}{1- e^{i \theta} z} d\mu(\theta)
$$
for all $f$ in $\overline{\mathrm{co}(C)}$. It easily follows by a geometric series expansion that the coefficients of $f$ are
$$
 a_n = \int_0^{2\pi} e^{i(n-1)\theta} d\mu(\theta), \quad n\geq 2.
$$
Since the functional $\F (f) = \la\,a_n^2 - a_{2n-1}$ has the property that $\F (f_c)=c^{2n-2} \F(f)$, after an appropriate rotation we can consider instead the problem of maximizing the quantity
$$
 M_n = \textrm{Re}\,\{\la\,a_n^2 - a_{2n-1}\}\,.
$$
Applying the Cauchy-Schwarz inequality to the first integral below, we obtain
\begin{eqnarray*}
 M_n &=& \la\,\left(\int_0^{2\pi} \cos\bigl((n-1)\theta\bigr) d\mu(\theta) \right)^2 -  \la\,\left(\int_0^{2\pi} \sin\bigl((n-1)\theta\bigr) d\mu(\theta) \right)^2
\\
 & & - \int_0^{2\pi} \cos\bigl(2(n-1)\theta\bigr) d\mu(\theta)
\\
 &\le & \la\,\int_0^{2\pi} \cos^2\bigl((n-1)\theta\bigr) d\mu(\theta) - 2 \int_0^{2\pi} \cos^2\bigl((n-1)\theta\bigr) d\mu(\theta) + 1
\\
 & = & (\la - 2) \int_0^{2\pi} \cos^2\bigl((n-1)\theta\bigr) d\mu(\theta) + 1
\\
 &\le & 1
\end{eqnarray*}
since $\la\le 2$ by assumption.
\par
We now check that the estimate is sharp. When $\la<2$, in order for equality to hold in the second inequality, $\cos\bigl((n-1)\theta\bigr) = 0$ must hold on the support of the measure $\mu$. That is, $\mu$ must be supported on the set
$$
 \left\{ \frac{(2k+1) \pi}{2n-2} : k=1,2,\ldots,2n-2 \right\}\,,
$$
possibly on a proper subset of it. Denote by $m_k$, $k=1$, $2$,\ldots,$2n-2$ the point masses corresponding to each of the points above. Then  $m_k\ge 0$ for all $k$ in $\{1,2,\ldots,2n-2\}$ and $\sum_{k=1}^{2n-2} m_k=1$ since $\mu$ is a probability measure and every candidate for an extremal function must be of the form
$$
 f(z) = \sum_{k=1}^{2n-2} m_k \frac{z}{1- e^{i \theta_k}z}\,.
$$
In order to have equality in the first inequality, the following must also hold:
$$
 0 = \int_0^{2\pi} \sin\bigl((n-1)\theta\bigr) d\mu(\theta) = \sum_{k=1}^{2n-2} m_k \sin\bigl((k+\frac12)\,\pi\bigr) = \sum_{k=1}^{2n-2} (-1)^k m_k \,.
$$
If the last two conditions are fulfilled, it is readily seen that equality indeed holds throughout.
\end{proof}

\par\medskip
\textbf{Estimates for the Noshiro-Warschawski class}. We now consider the functions in the normalized class
$$
 \rad = \{ f \in \ch(\D)\,\colon\,\mathrm{Re} f^\prime (z) > 0, f(0) = f^\prime (0)-1 = 0\}\,.
$$
A typical example of a function in $\rad$ is $f(z)=2\log\dfrac1{1-z} - z$ whose derivative is $f^\prime(z)= (1+z)/(1-z)$, a mapping of $\D$ onto the right half-plane. The branch of the logarithm is defined so that $\log 1=0$.
\par 
Note that $\rad\subset S$ by the basic Noshiro-Warschawski lemma \cite[Theorem~2.16]{Du}. It is  a ``small'' subclass of $S$ so the coefficients of the functions in it are ``small''. See, for example,  MacGregor's paper \cite{MG}, where it was shown that for $f$ in $\rad$ we have $|a_n|\le 2/n$. Thus, it is also natural to expect a smaller optimum estimate on the size of the Zalcman functional in $\rad$ than the one conjectured for $S$. This is indeed the case, as our next result shows.
\par\medskip
\begin{thm} \label{thm-nw}
If $0<\la\le 4/3$ and $f\in \rad$, then for all $n\ge 2$ we have
$$
 |\la\,a_n^2 - a_{2n-1}| \leq \frac{2}{2n-1}\,.
$$
For $\la<4/3$ and any fixed $n\ge 2$, equality holds only for the functions of the following form (and for their rotations): 
$$
 f(z) = \sum_{k=1}^{2n-2} m_k \left(2 e^{-i \theta_k} \log\frac{1}{1- e^{i \theta_k}z} -z\right) = \sum_{k=1}^{2n-2} 2 m_k e^{-i \theta_k} \log\frac{1}{1- e^{i \theta_k}z} -z\,,
$$
where
$$
 0\leq m_k \leq 1\,, \qquad \t_k=\frac{(2k+1)\,\pi}{2n-2}\,, \qquad \sum_{k=1}^{n-1} m_{2k} = \sum_{k=1}^{n-1} m_{2k-1} = 1/2\,.
$$
\end{thm}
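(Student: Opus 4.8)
The plan is to mimic the argument of Theorem~\ref{thm-co-convex}, but starting from the Herglotz representation of $f^\prime$ rather than of $f$ itself. Since $f\in\rad$ means precisely that $p:=f^\prime$ is analytic on $\D$ with $\mathrm{Re}\,p>0$ and $p(0)=1$, there is a probability measure $\mu$ on $[0,2\pi]$ with
$$
 f^\prime(z)=\int_0^{2\pi}\frac{1+e^{i\theta}z}{1-e^{i\theta}z}\,d\mu(\theta)
 = 1 + 2\sum_{k=1}^\infty\Big(\int_0^{2\pi}e^{ik\theta}\,d\mu(\theta)\Big)z^k .
$$
Integrating term by term and comparing with $f(z)=z+\sum_{n\ge2}a_n z^n$ yields the key formula $a_n=\frac{2}{n}\int_0^{2\pi}e^{i(n-1)\theta}\,d\mu(\theta)$ for $n\ge2$. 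The only structural change from the convex-hull case is the appearance of the weights $2/n$ and $2/(2n-1)$, and I expect these to be exactly what produces both the bound $2/(2n-1)$ and the threshold $4/3$.

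Next I would exploit the same rotational homogeneity $\F(f_c)=c^{2n-2}\F(f)$ to reduce to maximizing $M_n=\mathrm{Re}\{\la\,a_n^2-a_{2n-1}\}$. Writing $C=\int\cos((n-1)\theta)\,d\mu$, $S=\int\sin((n-1)\theta)\,d\mu$ and $I=\int\cos^2((n-1)\theta)\,d\mu$, the formula for $a_n$ gives
$$
 M_n=\frac{4\la}{n^2}\,(C^2-S^2)-\frac{2}{2n-1}\big(2I-1\big),
$$
where I have used $\cos(2(n-1)\theta)=2\cos^2((n-1)\theta)-1$ exactly as before. Applying the Cauchy--Schwarz inequality in the form $C^2\le I$ and discarding the term $-\frac{4\la}{n^2}S^2\le0$ produces
$$
 M_n\le\Big(\frac{4\la}{n^2}-\frac{4}{2n-1}\Big)I+\frac{2}{2n-1}.
$$
Since $I\ge0$, the desired bound $M_n\le 2/(2n-1)$ follows the moment the coefficient of $I$ is nonpositive, that is, as soon as $\la\le n^2/(2n-1)$.

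The point that replaces the elementary ``$\la\le2$'' of the previous theorem — and the step I would treat most carefully — is that this inequality must hold simultaneously for \emph{every} $n\ge2$. So I would verify that $g(n)=n^2/(2n-1)$ is increasing on $[2,\infty)$ (its derivative equals $2n(n-1)/(2n-1)^2>0$), whence $\min_{n\ge2}g(n)=g(2)=4/3$. Thus the single hypothesis $0<\la\le4/3$ secures the estimate for all $n$ at once, with $n=2$ the binding case; this is the only genuinely new feature of the argument.

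Finally, for sharpness when $\la<4/3$ the coefficient of $I$ is strictly negative for every $n\ge2$, so equality in $M_n=2/(2n-1)$ forces $I=0$ and $S=0$ (and then $C=0$, so the Cauchy--Schwarz step is automatically an equality). The condition $I=0$ confines the support of $\mu$ to the zeros of $\cos((n-1)\theta)$, namely the points $\theta_k=(2k+1)\pi/(2n-2)$; writing the point masses as $m_k$ and using $\sin((n-1)\theta_k)=\sin\big((k+\tfrac12)\pi\big)=(-1)^k$, the condition $S=0$ becomes $\sum(-1)^k m_k=0$, which together with $\sum m_k=1$ gives the stated balance $\sum_{k=1}^{n-1} m_{2k}=\sum_{k=1}^{n-1} m_{2k-1}=1/2$. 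Integrating the Herglotz kernel at a single atom,
$$
 \int_0^z\frac{1+e^{i\theta}t}{1-e^{i\theta}t}\,dt=2e^{-i\theta}\log\frac{1}{1-e^{i\theta}z}-z,
$$
identifies the extremal functions in the displayed form, and a direct substitution confirms that every such $f$ does attain the bound.
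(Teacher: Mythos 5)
Your proposal is correct and follows essentially the same route as the paper's own proof: the Herglotz representation of $f'$, the coefficient formula $a_n=\frac{2}{n}\int_0^{2\pi}e^{i(n-1)\theta}\,d\mu(\theta)$, rotation to reduce to $M_n=\mathrm{Re}\,\{\la\,a_n^2-a_{2n-1}\}$, the identical Cauchy--Schwarz estimate with the coefficient $\frac{4\la}{n^2}-\frac{4}{2n-1}$ of $I$, and the same equality analysis ($I=0$, $S=0$) leading to the atomic measures at $\theta_k=\frac{(2k+1)\pi}{2n-2}$. Your explicit verification that $n^2/(2n-1)$ is increasing on $[2,\infty)$, so that $\la\le 4/3$ handles all $n$ at once, is a small but welcome elaboration of a fact the paper uses implicitly.
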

\begin{proof} 
By the Herglotz representation theorem for functions with positive real part \cite[\S~1.9]{Du}, there is a unique probability measure $\mu$ on $[0,2\pi]$, such that
$$
 f^\prime(z) = \int_0^{2\pi} \frac{e^{i t} + z}{e^{i t} - z} d\mu(t) = \int_0^{2\pi} \frac{1+ e^{i \theta} z}{1- e^{i \theta} z} d\mu(\theta)
$$
or, equivalently,
$$
1+\sum_{n=2}^\infty n a_n z^{n-1} = 1+ \sum_{n=1}^\infty 2 \int_0^{2\pi}  e^{i n\theta} d\mu(\theta) z^n.
$$
This shows that the coefficients of $f$ are
$$
a_n = \frac{2}{n} \int_0^{2\pi} e^{i(n-1)\theta} d\mu(\theta), \quad n\geq 2.
$$
We can again consider maximizing the real-valued functional
$$
 M_n = \textrm{Re}\,\{\la\,a_n^2 - a_{2n-1}\}\,.
$$
After applying the Cauchy-Schwarz inequality, we get
\begin{align*}
 M_n &= \frac{4\,\la}{n^2} \left(\int_0^{2\pi} \cos\bigl((n-1)\theta\bigr) d\mu(\theta) \right)^2 - \frac{4\,\la}{n^2}  \left(\int_0^{2\pi} \sin\bigl((n-1)\theta\bigr) d\mu(\theta) \right)^2
\\
 &- \frac{2}{2n-1} \int_0^{2\pi} \cos\bigl(2(n-1)\theta\bigr) d\mu(\theta)
\\
 & \leq \frac{4\,\la}{n^2} \int_0^{2\pi} \cos^2\bigl((n-1)\theta\bigr) d\mu(\theta) -  \frac{2}{2n-1} \left( 2 \int_0^{2\pi} \cos^2\bigl((n-1)\theta\bigr) d\mu(\theta) -1  \right)
\\
 & = \frac{4((2n-1)\,\la - n^2)}{n^2(2n-1)}   \int_0^{2\pi} \cos^2\bigl((n-1)\theta\bigr) d\mu(\theta) + \frac{2}{2n-1} \leq  \frac{2}{2n-1}\,,
\end{align*}
since, in view of our  assumption that $\la\le 4/3$, we have
$(2n-1)\la - n^2 \le 0$, with strict inequality whenever $\la <4/3$.
\par
In the case $\la <4/3$, an inspection of the above chain of inequalities reveals that in order to have equality everywhere we must have $\cos\bigl((n-1)\theta\bigr) = 0$ on the support of $\mu$, that is
$$
 supp(\mu) \subset \left\{ \frac{(2k+1)\,\pi}{2n-2} : k=1,2,\ldots,2n-2 \right\}\,,
$$
and also
$$
 0 = \int_0^{2\pi} \sin\bigl((n-1)\theta\bigr) d\mu(\theta) = \sum_{k=1}^{2n-2} m_k \sin\bigl((k+\frac12)\,\pi\bigr) = \sum_{k=1}^{2n-2} m_k (-1)^k.
$$
Hence,
$$
 f(z) = \sum_{k=1}^{2n-2} m_k \int_{[0,z]}  \frac{1+ e^{i \theta_k} \zeta}{1- e^{i \theta_k} \zeta} d\zeta = \sum_{k=1}^{2n-2} m_k \left( 2 e^{-i \theta_k} \log\frac{1}{1- e^{i \theta_k}z} -z \right)\,.
$$
\end{proof}

\textbf{Estimates for the Hurwitz class}. The name \textit{Hurwitz class\/} is used by various authors to denote the set $\ch$ of all functions $f$ of the form
$$
 f(z)=z+a_2 z^2+ a_3 z^3+\ldots\,,
$$
analytic in $\D$ and with the property that $\sum_{n=2}^\infty n |a_n|\le 1$. Obviously, the $n$-th coefficient of a function in $\ch$ is subject to the estimate $|a_n|\le 1/n$ for each $n$. The simplest example of a function in $\ch$ is the polynomial $P_n (z)= z + \frac{z^n}{n}$, $n\ge 2$.
\par 
It is a well-known exercise that $\ch\subset S$. Actually, more is true: $\ch\subset\rad$. This can be seen as follows. If $f$ is a function in $\ch$ other than the identity, then  $f^\prime(0)=1$ and, when $z\neq 0$, we have the strict inequality 
$$ 
 \textrm{Re}\,f^\prime (z) = 1 + \sum_{n=2}^\infty n \textrm{Re}\,\{a_n z^{n-1}\} \ge 1 - \sum_{n=2}^\infty n |a_n| |z|^{n-1} > 1 - \sum_{n=2}^\infty n |a_n| \ge 0\,. 
$$
The reader is referred to \cite{G} for further properties of $\ch$. 
\par 
We now consider the generalized Zalcman functional for the functions in this class. One may again expect a better estimate for the functions in $\ch$ than for those in $\rad$. In order to prove such an inequality, the following simple lemma from the calculus of two variables will be useful.
\par\medskip
\begin{lem} \label{l-calc}
Let $\lambda>0$, $n\ge 2$, and consider the triangle
$$
 \Delta=\{(u,v)\in\R^2\,\colon\,u\ge 0,\, v\ge 0,\, nu+(2n-1)v \le 1\}
$$
in the $uv$-plane. Then
$$
 \max_{(u,v)\in\Delta} (\lambda u^2+v) = \max\left\{ \frac{\la}{n^2},
 \frac{1}{2n-1}\right\}\,,
$$
and equality can hold only at the points $(u,v)=(0,\frac{1}{2n-1})$ and $(u,v)=(\frac{1}{n},0)$.
\end{lem}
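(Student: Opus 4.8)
The plan is to maximize the continuous function $g(u,v)=\la u^2+v$ over the compact triangle $\Delta$ by exploiting its convexity. Since $\la>0$, the function $g$ is convex on $\R^2$, being the sum of the convex function $\la u^2$ and the linear function $v$; hence its maximum over the convex polygon $\Delta$ is attained at one of the three vertices. These are $(0,0)$, $(\frac1n,0)$ and $(0,\frac1{2n-1})$, at which $g$ takes the values $0$, $\frac{\la}{n^2}$ and $\frac1{2n-1}$, respectively. As the latter two are positive, the maximum equals $\max\{\frac{\la}{n^2},\frac1{2n-1}\}$, which is the stated value.

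It remains to locate precisely where equality occurs, and this is where a little care is needed. First I would exclude interior maxima: the gradient $\nabla g=(2\la u,1)$ never vanishes, since its second component is $1$, so $g$ has no critical point in the interior and every maximizer lies on the boundary $\pd\Delta$. I would then examine the three edges separately. On the edge $u=0$ one has $g=v$, which is strictly increasing, so its maximum there is attained only at $(0,\frac1{2n-1})$; on the edge $v=0$ one has $g=\la u^2$, strictly increasing for $u\ge0$ because $\la>0$, so its maximum there is attained only at $(\frac1n,0)$.

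For the hypotenuse $nu+(2n-1)v=1$ I would substitute $v=\frac{1-nu}{2n-1}$ with $u\in[0,\frac1n]$, obtaining the one-variable function
$$
 h(u)=\la u^2-\frac{n}{2n-1}\,u+\frac1{2n-1}\,.
$$
This is a strictly convex parabola, so on the closed interval $[0,\frac1n]$ it attains its maximum only at an endpoint, and for every interior $u$ we have $h(u)<\max\{h(0),h(1/n)\}$. Since $h(0)=\frac1{2n-1}$ and $h(\frac1n)=\frac{\la}{n^2}$ merely reproduce the two vertex values, no interior point of the hypotenuse can be a maximizer. Combining the three edges shows that the maximum of $g$ over $\Delta$ is attained only at $(0,\frac1{2n-1})$ and $(\frac1n,0)$ — at both vertices when the two candidate values coincide, and otherwise at the single vertex realizing the larger value — which is exactly the assertion. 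The one subtlety, and the step I would treat most carefully, is the strict-convexity argument on the hypotenuse used to rule out interior maximizers there.
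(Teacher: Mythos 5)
Your proof is correct and follows essentially the same route as the paper's: no interior critical points, an edge-by-edge analysis of $\partial\Delta$, and strict convexity of the restricted quadratic $h$ to rule out maximizers in the interior of the hypotenuse. The only difference is your opening appeal to the extreme-point principle for convex functions to identify the value of the maximum at the outset, whereas the paper extracts both the value and the location of the maximizers from the same boundary analysis.
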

\begin{proof}
The function $F(u,v)=\lambda u^2+v$ is readily seen to have no critical points so its maximum on the compact set $\Delta$ is achieved on the boundary $\partial\Delta$.
\par
Clearly, $F(0,v)=v\le \frac1{2n-1}$ while $F(u,0)=\lambda u^2 \le \frac{\lambda}{n^2}$.
\par
Finally, on the third piece of the boundary of $\Delta$ we have $nu+(2n-1)v=1$, hence the function $F$ there can be seen as a function of one variable
$$
 F(u,v)=g(u)=\lambda u^2+\frac{1-nu}{2n-1}\,.
$$
Since $g^{\prime\prime}(u)=2\lambda>0$, the above function cannot achieve its maximum even if its critical point $\frac{n}{2 (2n-1) \lambda}$ is an interior point of the admissible interval $[0,\frac1{2n-1}]$ for $u$. Hence, the maximum value can only be achieved at one of the endpoints of this interval. Since
$$
 g(0)=\frac{1}{2n-1}\,, \quad g\(\frac{1}{n}\)=\frac{\lambda}{n^2}\,,
$$
and $u=1/n$ if and only if $v=0$, the statement follows.
\end{proof}
\par\medskip
\begin{thm} \label{thm-h}
If $\la>0$ and $f\in \ch$, then for each $n \ge 2$ we have
\begin{equation}
 |\la\,a_n^2 - a_{2n-1}| \leq \max\left\{ \frac{\la}{n^2},
 \frac{1}{2n-1}\right\}\,.
 \label{ineq-max}
\end{equation}
Equality holds if and only if
$$
f(z) =
\begin{cases}
z + \frac{\alpha}{2n-1} \, z^{2n-1}, \quad \text{for} \; \;
\la\leq\frac{n^2}{2n-1}  \\[\jot] z + \frac{\alpha}{n} \, z^n,
\quad\qquad \text{for} \; \; \la \geq \frac{n^2}{2n-1},
\end{cases}
$$
where $\alpha$ is a complex number such that $|\alpha|=1$. 
\par 
Note that the rotations are already included in the form of extremal functions. Also, two different types of extremal functions exist in the case $\la=\frac{n^2}{2n-1}$.
\end{thm}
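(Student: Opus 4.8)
The plan is to reduce the whole estimate to the two–variable maximization already settled in Lemma~\ref{l-calc}, exploiting the single defining feature of $\ch$, namely the condition $\sum_{k=2}^\infty k|a_k|\le 1$. Since $n\ge 2$ forces $n\ne 2n-1$ (and both indices are at least $2$), the coefficients $a_n$ and $a_{2n-1}$ occupy two distinct slots of this series; discarding all the other nonnegative terms yields
$$
 n|a_n| + (2n-1)|a_{2n-1}| \le \sum_{k=2}^\infty k|a_k| \le 1\,.
$$
Writing $u=|a_n|$ and $v=|a_{2n-1}|$, this is precisely the statement that $(u,v)$ lies in the triangle $\De$ of Lemma~\ref{l-calc}.

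Next I would apply the triangle inequality to pass from the complex functional to a bound on its modulus,
$$
 |\la\,a_n^2 - a_{2n-1}| \le \la|a_n|^2 + |a_{2n-1}| = \la u^2 + v\,,
$$
and then invoke Lemma~\ref{l-calc} to obtain $\la u^2+v \le \max\{\la/n^2,\,1/(2n-1)\}$, which is exactly \eqref{ineq-max}. This disposes of the inequality with essentially no computation; all the arithmetic has been outsourced to the lemma.

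The only point requiring genuine care, and the step I expect to be the main obstacle, is the equality analysis. Equality in \eqref{ineq-max} forces two things at once: equality in the second displayed step, $\la u^2+v=\max\{\la/n^2,1/(2n-1)\}$, and equality in the triangle inequality. By the sharpness clause of Lemma~\ref{l-calc}, the first can occur only at the corner points $(u,v)=(0,\tfrac1{2n-1})$ or $(u,v)=(\tfrac1n,0)$. Both corners lie on the line $nu+(2n-1)v=1$, so to reach either one the chain of inequalities above must be tight throughout; in particular $\sum_{k=2}^\infty k|a_k|=1$ with no contribution from any index other than $n$ and $2n-1$. Hence every Taylor coefficient except possibly $a_n$ and $a_{2n-1}$ vanishes, and at each corner one of these two is itself zero, so $f$ must be a single monomial added to the identity: $f(z)=z+\tfrac{\a}{2n-1}z^{2n-1}$ at the first corner and $f(z)=z+\tfrac{\a}{n}z^n$ at the second, with $|\a|=1$. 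Notice that the alignment–of–arguments condition coming from the triangle inequality is then automatic, since at each corner one of the two terms $\la a_n^2$ and $a_{2n-1}$ is zero. Comparing the two candidate values $\la/n^2$ and $1/(2n-1)$ selects the extremal function according to whether $\la$ is smaller or larger than $n^2/(2n-1)$, both surviving precisely when $\la=n^2/(2n-1)$, which explains the two coexisting families of extremal functions in that borderline case. Finally, the sufficiency of these forms is checked by direct substitution: for $f(z)=z+\tfrac{\a}{2n-1}z^{2n-1}$ one has $a_n=0$ and $|a_{2n-1}|=\tfrac1{2n-1}$, giving value $\tfrac1{2n-1}$, and symmetrically for the other monomial.
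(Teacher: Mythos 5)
Your proof is correct and follows essentially the same route as the paper: both reduce the problem to Lemma~\ref{l-calc} via the constraint $n|a_n|+(2n-1)|a_{2n-1}|\le 1$ extracted from the Hurwitz condition, and both identify the extremal functions from the equality clause of that lemma (corner points forcing all remaining coefficients to vanish). The only difference is cosmetic: the paper first invokes rotation-homogeneity of the functional and the maximum modulus principle in $\C^2$ before arriving at the same bound $\la|a_n|^2+|a_{2n-1}|$, whereas you obtain it directly from the triangle inequality, and your equality analysis (both corners lie on the line $nu+(2n-1)v=1$, and the alignment condition is automatic because one of the two terms vanishes) is, if anything, spelled out more explicitly than in the paper.
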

\begin{proof}
Consider the open set
$\Omega = \{(z,w)\in\C^2\,\colon\,n|z| + (2n-1) |w|<1\}$. By standard metric/topological arguments, it is easily checked that its boundary is the set $\partial\Omega = \{(z,w)\in\C^2\,\colon\,n|z| + (2n-1) |w|=1\}$.
\par
By the defining condition of the Hurwitz class, the ordered pair of coefficients $(a_n, a_{2n-1})$ of an arbitrary function in $\ch$ must belong to the set
$$
 \overline{\Omega} = \{(z,w)\in\C^2\,\colon\,n|z| + (2n-1) |w| \leq 1\} \,.
$$
The function $\F(z,w)=\la z^2 - w$ is holomorphic in $\C^2$, hence by  the Maximum Modulus Principle it follows that $\max_{\overline{\Omega}} |\F| = \max_{\partial \Omega} |\F|$.
\par
Note that, upon applying the rotation which transforms $f$ into $f_c$, $|c|=1$, the ordered pair $(a_n,a_{2n-1})$ gets converted into $(A_n,A_{2n-1})=(c^{n-1} a_n,c^{2n-2} a_{2n-1})$. It is clear that $(a_n,a_{2n-1})\in \overline{\Omega}$ if and only if $(A_n,A_{2n-1})\in \overline{\Omega}$. Thus, in view of homogeneity of the generalized Zalcman functional: $\F (f_c)=c^{2n-2} \F(f)$, instead of maximizing $|\la\,a_n^2 - a_{2n-1}|$, we may consider maximizing the quantity
$$
 |\F (a_{n},a_{2n-1})| = \textrm{Re}\, \F (A_n,A_{2n-1}) = \la (x^2-y^2) - r\,,
$$
where $A_n = c^{n-1} a_n = x+yi$ and $A_{2n-1} = c^{2n-2} a_{2n-1} = r+si$. By Lemma~\ref{l-calc} it follows that
\begin{equation}
 |\F (a_{n},a_{2n-1})| \le \la (x^2+y^2) + |r| \le \la |A_n|^2 + |A_{2n-1}| \le \max\left\{ \frac{\la}{n^2},\frac{1}{2n-1}\right\}\,.
 \label{ineq-final}
\end{equation}
In view of these observations, we deduce that \eqref{ineq-max} holds.
\par
In order for equality to hold in the final estimate \eqref{ineq-max}, equality must hold in all intermediate inequalities. Thus, in particular, we must have $(a_n,a_{2n-1})\in \partial \Omega$; that is, $n|a_n| + (2n-1) |a_{2n-1}|=1$. Together with the defining condition of $\ch$, it follows that the rotated function must be of the form $f_c (z) = z + A_n z^n + A_{2n-1} z^{2n-1}$. Further inspection of the case of equality in Lemma~\ref{l-calc} and the values of $\la$ readily yields that one of the coefficients $A_n$, $A_{2n-1}$ must be zero and the more precise form of these functions follows immediately.
\end{proof}


\end{document}